\documentclass{article}
\pdfoutput=1
\usepackage{graphicx,url}
\usepackage[all]{xy}

\usepackage{amsfonts}
\usepackage{amssymb,amsmath,amscd}

\newtheorem{theorem}{Theorem}
\newtheorem{lemma}[theorem]{Lemma}

\newtheorem{corollary}[theorem]{Corollary}
\newtheorem{definition}{Definition}

\usepackage{amsfonts,amssymb,amsmath,,amscd}
\newcommand{\C}{\ensuremath{\mathbb{C}}}

\newcommand{\R}{\ensuremath{\mathbb{R}}}

\date{\today}
\begin{document}

\title{Angular momentum and Horn's problem}

\author{Alain Chenciner \& Hugo Jim\'enez-P\'erez
 \\
  \small Observatoire de Paris, IMCCE (UMR 8028), ASD\\
  \small 77, avenue Denfert-Rochereau, 75014 Paris, France\\
  \small \texttt{chenciner@imcce.fr, jimenez@imcce.fr}}

\maketitle

\abstract
We prove a conjecture made in \cite{C1}: given an $n$-body central configuration $X_0$ in the euclidean space $E$ of dimension $2p$,  let $Im{\cal F}$ be the set of ordered real $p$-tuples $\{\nu_1,\nu_2,\cdots,\nu_p\}$ such that 
$\{\pm i\nu_1,\pm i\nu_2,\cdots,\pm i\nu_p\}$
is the spectrum of the  angular momentum of some (periodic) relative equilibrium motion of $X_0$  in $E$. Then $Im {\cal F}$ is a convex polytope. The proof consists in showing that there exist two, generically $(p-1)$-dimensional, convex polytopes ${\cal P}_1$ and ${\cal P}_2$ in $\R^{p}$ such that ${\cal P}_1\subset Im{\cal F}\subset {\cal P}_2$ and that these two polytopes coincide. 

${\cal P}_1$, introduced in \cite{C1}, is the set of spectra corresponding to the hermitian structures $J$ on $E$ which are ``adapted" to the symmetries of the inertia matrix $S_0$; it is associated with Horn's problem for the sum of $p\times p$ real  symmetric matrices with spectra $\sigma_-$ and $\sigma_+$  whose union  is the spectrum  of $S_0$. 

${\cal P}_2$ is the orthogonal projection onto the set of  ``hermitian spectra" of the polytope 
${\cal P}$ associated with Horn's problem for the sum of $2p\times 2p$ real symmetric matrices having each the same spectrum as $S_0$. 

The equality ${\cal P}_1={\cal P}_2$ follows directly from a deep combinatorial lemma, proved in \cite{FFLP}, which characterizes those of the sums $C=A+B$ of two $2p\times 2p$ real symmetric matrices $A$ and $B$ with the same spectrum, which are hermitian for some hermitian structure.
\medskip

\section{Origin of the problem: $N$-body relative equilibria and their angular momenta}
We recall here the results of \cite{AC,C1,C2} which are needed in order to understand the mechanical origin of the purely algebraic conjecture solved in the present paper: 
given a configuration $x_0=(\vec r_1,\cdots,\vec r_N)\in E^N$ of $N$ punctual positive masses in the euclidean space $E$, a {\it rigid motion} of the configuration under Newton's attraction is a motion in which the mutual distances  $||\vec r_i-\vec r_j||$ between the bodies stay constant. It is proved in \cite{AC} (see also \cite{C2}) that such a motion is necessarily a {\it relative equilibrium}. This implies that the motion takes place in a space of even dimension $2p$, which can be supposed to coincide with $E$, and that, in a galilean frame fixing the center of mass at the origin, it is of the form 
$x(t)=(e^{\Omega t}\vec r_1,e^{\Omega t}\vec r_2,\cdots,e^{\Omega t}\vec r_N)$, where $\Omega$ is a $2p\times 2p$-antisymmetric endomorphism of the euclidean space $E$ which is non degenerate. 
Choosing an orthonormal basis where $\Omega$ is normalized, this amounts to saying that
there exists a hermitian structure on the space $E$ of motion and an orthogonal decomposition $E\equiv\C^p=\C^{k_1}\times\cdots\times\C^{k_r}$ such that
$$x(t)=(x_1(t),\cdots,x_r(t))=(e^{i\omega_1t}x_1,\cdots,e^{i\omega_rt}x_r),$$
where $x_m$ is the orthogonal projection on $\C^{k_m}$ of the $N$-body configuration $x$ and  the action of $e^{i\omega_mt}$ on $x_m$ is the diagonal action on each body of the projected configuration. Such quasi-periodic motions exist only for very special configurations, called {\it balanced configurations} (see \cite{AC,C2} for their characterization).
The most degenerate balanced configurations are the {\it central configurations} for which all the frequencies $\omega_i$ are the same; this means that $\Omega=\omega J$, with $J$ a hermitian structure on $E$, and the motion is
$$x(t)=(\vec r_1(t),\cdots,\vec r_N(t))=e^{i\omega t}x_0=(e^{i\omega t}\vec r_1,\cdots,e^{i\omega t}\vec r_N)$$ in the hermitian space $E\equiv\C^{p}$; in particular, it is periodic. In a space of dimension at most 3, $E$ is necessarily of dimension 2 and the configuration of any relative equilibrium is central. 
\smallskip

Given a configuration $x=(\vec r_1,\cdots,\vec r_N)$ and a configuration of velocities $y=\dot x=(\vec v_1,\cdots, \vec v_N)$, both with center of mass at the origin: 
$\sum_{k=1}^Nm_k\vec r_k=\sum_{k=1}^Nm_k\vec v_k=0$, the {\it angular momentum} of $(x,y)$ is the bivector ${\mathcal C}=\sum_{k=1}^Nm_k\vec r_k\wedge\vec v_k$. If we represent $x$ and $y$ by the $2p\times N$ matrices $X$ and $Y$ whose $i$th column are respectively made of the components $(r_{1i},\cdots,r_{2pi})$ and $(v_{1i},\cdots,v_{2pi})$ of $\vec r_i$ and $\vec v_i$ in an orthonormal basis of $E$ and if $M=\hbox{diag}(m_1,\cdots,m_N)$, this bivector is represented by the antisymmetric matrix
{\it (we use the french convention $^{t\!}X$ for the transposed of $X$)}
$$C=-XM^{t\!}Y+YM^{t\!\!}X\;\;\hbox{with coefficients}\;\; c_{ij}
=\sum_{k=1}^Nm_k(-r_{ik}v_{jk}+r_{jk}v_{ik}).$$
The dynamics of a solid body is determined by its {\it inertia tensor} (with respect to its center of mass), represented in the case of a point masses configuration $X$ by the symmetric matrix 
$$S=XM^{t\!\!}X\;\;\hbox{with coefficients}\;\; 
s_{ij}=\sum_{k=1}^Nm_kr_{ik}r_{jk},$$
whose trace is the {\it moment of inertia of the configuration $x$ with respect to its center of mass}.
In particular, the angular momentum of a relative equilibrium solution $X(t)=e^{t\Omega}X_0$
is represented by the antisymmetric matrix 
$C=S_0\Omega+\Omega S_0$, where $S_0=X_0M^{t\!\!}X_0$. Restricting to the case of central configurations, that is $\Omega=\omega J$, and making $\omega=1$, we consider in what follows the spectrum of $J$-skew-hermitian matrices of the form $S_0J+JS_0$ or, what amounts to the same, the spectrum of $J$-hermitian matrices\footnote{Notice that this is the same as the spectrum of the $J_0$-hermitian matrix 
$\Sigma=J_0^{-1}SJ_0+S$, where $S=RS_0R^{-1}$, which was considered in \cite{C1}.} of the form
$J^{-1}S_0J+S_0$.
\smallskip

\noindent {\it In the following, we identify $E$ with $\R^{2p}$ by the choice of some orthonormal basis. $\R^{2p}$ is endowed with its canonical basis $e_i=(0,\cdots,1,\cdots,0)$ and its canonical euclidean scalar product $x\cdot y=\sum_{i=1}^{2p}x_iy_i$; this allows identifying linear endomorphisms of $E=\R^{2p}$ and $2p\times 2p$ matrices with real coefficients. When we say that $J$ is a hermitian structure, we mean that the standard euclidean structure is given and that $J$ is a complex structure which is orthogonal.}

\section{The frequency map}
We recall the definition, given in \cite{C1}, of the {\it frequency map} ${\cal F}$ from the set of hermitian structures on $\R^{2p}$ to the positive Weyl chamber $W_p^+\subset \R^p$: 
given some $2p\times 2p$ real symmetric matrix $S_0$, we consider the mapping $J\mapsto J^{-1}S_0J+S_0$ from the space of hermitian structures on $E$ to the set of $2p\times 2p$ real symmetric matrices. We are only interested in the spectra of these matrices, hence choosing an orientation for $J$ is harmless and we shall consider only those of the form $J=R^{-1}J_0R$, where $J_0$ is the ``standard" structure defined by $J_0=\begin{pmatrix}0&-Id\\ Id&0\end{pmatrix}$ and $R\in SO(2p)$. Two elements $R'$ and $R''$ of $SO(2p)$ defining the same $J$ if and only if there exists an element $U\in U(p)$ such that $R''=UR'$, it follows that the space of (oriented) hermitian structures is identified to the homogeneous space $U(p)\backslash SO(2p)$. 
The symmetric matrix $J^{-1}S_0J+S_0$ is $J$-hermitian, that is, it commutes with $J$. This implies that its spectrum is real, of the form $\{\nu_1,\nu_2,\cdots,\nu_p\}$ if considered as a $p\times p$ complex matrix (for the identification of $\R^{2p}$ to $\C^p$ defined by $J$) and of the form $\{(\nu_1, \nu_2, \cdots, \nu_p), (\nu_1, \nu_2, \cdots, \nu_p)\}$ if considered as a $2p\times 2p$ real matrix (see the next section for the trivial proof).
\begin{definition}
The {\it frequency mapping} 
$${\mathcal F}:U(p)\backslash SO(2p)\to W_p^+=\{(\nu_1,\cdots\nu_p)\in\R^p,   \nu_1\ge\cdots\ge\nu_p\}$$ associates to each hermitian structure $J$ the ordered spectrum
$(\nu_1,\cdots,\nu_p)$ of the $J$-hermitian matrix
$J^{-1}S_0J+S_0$.
\end{definition}

\section{Hermitian spectra}
\begin{lemma} 
Let $C:\R^{2p}\to\R^{2p}$ be a symmetric endomorphism. The following assertions are equivalent:

1) There exists a hermitian structure $J=R^{-1}J_0R$ such that $C$ be $J$-hermitian (i.e. $JC=CJ$);

2) The spectrum $\sigma(C)$ of $C$ is of the form
$$\sigma(C)=\{(\nu_1, \nu_2, \cdots, \nu_p), (\nu_1, \nu_2, \cdots, \nu_p)\}.$$
\end{lemma}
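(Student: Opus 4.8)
The plan is to prove the two implications separately, with the geometric content concentrated in the direction $2)\Rightarrow 1)$. First I would dispose of the easy direction $1)\Rightarrow 2)$: if $C$ commutes with a complex structure $J$, then $C$ is the realification of a complex-linear endomorphism of $(\R^{2p},J)\cong\C^p$. Since $C$ is symmetric and $J$ is orthogonal, $C$ is in fact $J$-hermitian as a complex operator, hence $\C$-diagonalizable with real eigenvalues $\nu_1,\dots,\nu_p$. The point is that each real eigenvalue $\nu_m$ of the complex operator contributes a two-dimensional real eigenspace (the underlying real space of the complex eigenline), so as a real $2p\times 2p$ matrix every $\nu_m$ occurs with even multiplicity; listing them gives exactly $\sigma(C)=\{(\nu_1,\dots,\nu_p),(\nu_1,\dots,\nu_p)\}$.

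For the converse $2)\Rightarrow 1)$, the idea is to build $J$ directly from the real spectral decomposition of $C$. Since $C$ is symmetric, I would write $\R^{2p}=\bigoplus_\lambda V_\lambda$ as an orthogonal sum of real eigenspaces. The hypothesis is precisely that each $\dim V_\lambda$ is even, so on each $V_\lambda$ one can choose an orthogonal complex structure $J_\lambda$ (any even-dimensional euclidean space admits one, e.g. by pairing an orthonormal basis). Setting $J=\bigoplus_\lambda J_\lambda$ produces an orthogonal endomorphism with $J^2=-Id$, i.e.\ a hermitian structure, and by construction $J$ preserves each $V_\lambda$ and hence commutes with $C$ (which acts as a scalar on each $V_\lambda$). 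It remains to check that $J$ has the correct form $R^{-1}J_0R$ for some $R\in SO(2p)$, which holds because all orthogonal complex structures on $\R^{2p}$ are conjugate under $O(2p)$, and one can arrange $R\in SO(2p)$ since $J$ and $J_0$ induce the same (standard) orientation once the $J_\lambda$ are chosen compatibly.

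The main obstacle, though minor, is the bookkeeping ensuring $R$ lands in $SO(2p)$ rather than merely $O(2p)$: a complex structure carries a canonical orientation, and I must verify that the block structure does not force an orientation-reversing conjugation. This is handled by noting that one is free to flip the orientation of any single $J_\lambda$ (replacing $J_\lambda$ by $-J_\lambda$ changes the induced orientation of $V_\lambda$), so the overall orientation can always be matched to that of $J_0$. Since we only care about the spectrum of $J^{-1}CJ+\dots$ and the statement explicitly allows choosing the orientation of $J$, this subtlety is harmless. I expect the whole argument to be short, the only genuinely substantive observation being the even-multiplicity characterization, which is the bridge between the real and complex viewpoints.
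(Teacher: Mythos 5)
Your proposal is correct in substance and is essentially the paper's proof in coordinate-free form. The paper handles $1)\Rightarrow 2)$ by observing that $RCR^{-1}$ is $J_0$-hermitian and hence conjugate by some $U\in U(p)\subset SO(2p)$ to $\mathrm{diag}\bigl((\nu_1,\dots,\nu_p),(\nu_1,\dots,\nu_p)\bigr)$, and $2)\Rightarrow 1)$ by orthogonally diagonalizing $C$ in this paired form, which visibly commutes with $J_0$ and hence makes $C$ commute with the conjugated structure. Your even-multiplicity argument via the complex eigenlines, and your blockwise construction $J=\bigoplus_\lambda J_\lambda$ on the real eigenspaces $V_\lambda$, encode exactly the same spectral facts: choosing an orthonormal eigenbasis of $C$ with eigenvalues paired between positions $i$ and $p+i$ is the matrix form of choosing orthogonal complex structures on the $V_\lambda$.

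The one concrete error is in your orientation bookkeeping: it is \emph{not} true that replacing $J_\lambda$ by $-J_\lambda$ flips the orientation induced on $V_\lambda$. If $\dim V_\lambda=2k_\lambda$, passing from the adapted basis $(v_1,J_\lambda v_1,\dots,v_{k_\lambda},J_\lambda v_{k_\lambda})$ to $(v_1,-J_\lambda v_1,\dots,v_{k_\lambda},-J_\lambda v_{k_\lambda})$ multiplies the orientation by $(-1)^{k_\lambda}$, so your flip works only when $\dim V_\lambda\equiv 2 \pmod 4$; for instance, if $C$ is scalar on $\R^4$ (a single eigenspace with $k_\lambda=2$), your move changes nothing. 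The repair is easy: conjugate a single $J_\lambda$ by a hyperplane reflection of $V_\lambda$ --- conjugation by an orientation-reversing orthogonal map exchanges the two orientation classes of complex structures on $V_\lambda$, while preserving orthogonality, $J_\lambda^2=-\mathrm{Id}$, and commutation with the scalar $C|_{V_\lambda}$ --- or, in the paper's matrix formulation, change the sign of one column of the diagonalizing matrix $R\in O(2p)$, which leaves $R^{-1}CR$ unchanged and moves $R$ into $SO(2p)$ so that the orientation subtlety never arises. Your closing remark that the subtlety is ``harmless because only spectra matter'' is not quite a proof of the lemma as stated, since the statement requires $J=R^{-1}J_0R$ with $R\in SO(2p)$ under the paper's convention; but with the one-line repair above your argument is complete.
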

\begin{proof} Let $J=R^{-1}J_0R$; the mapping $C$ is $J$-hermitian if and only if $RCR^{-1}$ is $J_0$-hermitian. This is equivalent to the existence of  an isomorphism $U\in U(p)\subset SO(2p)$ such that 
$$U^{-1}RCR^{-1}U=\hbox{diag}\bigl((\nu_1,\cdots,\nu_p),(\nu_1,\cdots,\nu_p)\bigr).$$
Conversely, the identity
$$R^{-1}CR=\hbox{diag}\bigl((\nu_1,\cdots,\nu_p),(\nu_1,\cdots,\nu_p)\bigr)$$
implies the commutation of $R^{-1}CR$ with $J_0$ and hence the commutation of $C$ with $J=R^{-1}J_0R$. 
\end{proof}
\smallskip

\noindent  {\bf Notations.} We shall call {\it hermitian} the spectra of this form and {\it the diagonal}  the linear subspace $\Delta$ of $W_{2p}^+$ defined by 
$$\Delta=\{(\mu_1\ge\mu_2\ge\cdots\ge\mu_{2p}),\; \mu_1=\mu_2,\, \mu_3=\mu_4,\, \, \cdots,\mu_{2p-1}=\mu_{2p}\}.$$
Hence the ordered hermitian spectra are the ones belonging to $\Delta$.

\section{Two convex polytopes}

Let $S_0:\R^{2p}\to\R^{2p}$ be a symmetric endomorphism with spectrum
$$\sigma(S_0)=\{\sigma_1\ge\sigma_2\ge\cdots\ge\sigma_{2p}\}.$$
To $S_0$ we associate the subsets ${\cal P}_1$ and ${\cal P}_2$ of $\R^p$ (in fact of the positive Weyl chamber $W_p^+$ of ordered real  $p$-tuples), defined as follows:
\smallskip

1) ${\cal P}_1$ is the set of ordered spectra 
$$\sigma(c)=\{\nu_1\ge\nu_2\ge\cdots\ge\nu_p\}$$ of symmetric endomorphisms $c$ of $\R^p$ of the form $c=a+b$, where $a:\R^p\to\R^p$ and $b:\R^p\to\R^p$ are arbitrary symmetric endomorphisms with respective spectra
$$\sigma(a)=\sigma_-:= \{\sigma_1,\sigma_3,\cdots,\sigma_{2p-1}\},\quad \sigma(b)=\sigma_+:=\{\sigma_2,\sigma_4,\cdots,\sigma_{2p}\} ;$$ 

2) ${\cal P}_2$ is the set of $p$-tuples $\{\nu_1\ge\nu_2\ge\cdots\ge\nu_p\}$ such that
$$\{(\nu_1,\nu_2\cdots,\nu_{p}),(\nu_1,\nu_2\cdots,\nu_{p})\}$$
is the spectrum of some symmetric endomorphism $C$ of $\R^{2p}$ of the form $C=A+B$, where $A:\R^{2p}\to\R^{2p}$ and $B:\R^{2p}\to\R^{2p}$ are arbitrary symmetric endomorphisms with the same spectrum 
$$\sigma(A)=\sigma(B)=\sigma(S_0).$$ 
In other words, identifying canonically the diagonal $\Delta$ with $\R^p$, one can write 
$${\cal P}_2={\cal P}\cap\Delta,$$ 
where ${\cal P}$ is the $(2p-1)$-dimensional Horn polytope which describes the ordered spectra of sums $C=A+B$ of $2p\times 2p$ real symmetric  matrices $A,B$ with the same spectrum as $S_0$.  
\begin{lemma}
${\cal P}_1$ and  ${\cal P}_2$ are both contained in the hyperplane of $\R^p$ with equation 
$$\sum_{i=1}^p\nu_i=\sum_{j=1}^{2p}\sigma_j.$$  
Moreover, ${\cal P}_1$ and ${\cal P}_2$ are both $(p-1)$-dimensional convex polytopes and
$${\cal P}_1\subset Im {\cal F} \subset{\cal P}_2.$$
\end{lemma}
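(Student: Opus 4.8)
The plan is to dispatch the four assertions in turn: the common hyperplane, convexity, dimension, and finally the two inclusions, which form the heart of the matter. The hyperplane containment is a trace computation. If $c=a+b$ with $\sigma(a)=\sigma_-$ and $\sigma(b)=\sigma_+$, then $\sum_{i=1}^p\nu_i=\mathrm{tr}\,c=\mathrm{tr}\,a+\mathrm{tr}\,b=\sum_{j=1}^{2p}\sigma_j$, so ${\cal P}_1$ lies in the stated hyperplane $H$; likewise if $C=A+B$ with $\sigma(A)=\sigma(B)=\sigma(S_0)$ and $\sigma(C)=\{(\nu_i),(\nu_i)\}$, then $2\sum_{i=1}^p\nu_i=\mathrm{tr}\,C=2\sum_{j=1}^{2p}\sigma_j$, giving the same equation for ${\cal P}_2$. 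For convexity, ${\cal P}_1$ is by definition the set of ordered spectra of sums $a+b$ of two $p\times p$ symmetric matrices with prescribed spectra $\sigma_-$ and $\sigma_+$, which is the Horn polytope in dimension $p$, convex by the solution of Horn's problem (Klyachko, Knutson--Tao); and ${\cal P}_2={\cal P}\cap\Delta$ is the intersection of the $(2p-1)$-dimensional Horn polytope ${\cal P}$ with the linear subspace $\Delta$, hence again a convex polytope. Both lie in $H$, of dimension $p-1$, so both have dimension at most $p-1$; the Horn polytope ${\cal P}_1$ has dimension exactly $p-1$ (it drops only in the non-generic case where $\sigma_-$ or $\sigma_+$ is constant), and the sandwich ${\cal P}_1\subset{\cal P}_2\subset H$ established below then forces $\dim{\cal P}_2=p-1$ as well.

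The inclusions rest on one algebraic identity. Since $J=R^{-1}J_0R$ is orthogonal with $J^2=-Id$, we have $J^{-1}=-J$, hence the $J$-hermitian matrix attached to $J$ is
$$C=J^{-1}S_0J+S_0=S_0-JS_0J=2S_0^+,$$
twice the $J$-linear (hermitian) part of $S_0$. For $Im{\cal F}\subset{\cal P}_2$ I would set $A=J^{-1}S_0J$ and $B=S_0$: both are symmetric with $\sigma(A)=\sigma(B)=\sigma(S_0)$, so $\sigma(C)\in{\cal P}$, while $C$ is $J$-hermitian, so $\sigma(C)\in\Delta$ by the Hermitian spectra lemma; thus ${\cal F}(J)=(\nu_1,\dots,\nu_p)\in{\cal P}\cap\Delta={\cal P}_2$.

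For the reverse inclusion ${\cal P}_1\subset Im{\cal F}$, I would diagonalize $S_0$ and reorder the orthonormal eigenbasis (odd indices first) so that $S_0=\begin{pmatrix}D_-&0\\0&D_+\end{pmatrix}$ with $\sigma(D_-)=\sigma_-$, $\sigma(D_+)=\sigma_+$, and take the ``adapted'' structures $J=\begin{pmatrix}0&-V\\V^{-1}&0\end{pmatrix}$ with $V\in O(p)$ (one checks $J^2=-Id$ and ${}^tJ=J^{-1}$). A direct block computation gives
$$C=S_0-JS_0J=\begin{pmatrix}D_-+VD_+V^{-1}&0\\0&D_++V^{-1}D_-V\end{pmatrix},$$
whose two diagonal blocks are conjugate, so $\sigma(C)$ is the spectrum of $a+b$ with $a=D_-$ and $b=VD_+V^{-1}$, doubled. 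As $V$ runs over $O(p)$, $b$ runs over the whole $O(p)$-orbit of $D_+$, and by conjugation invariance of Horn's problem the resulting spectra of $a+b$ exhaust ${\cal P}_1$; hence every point of ${\cal P}_1$ is a value ${\cal F}(J)$ of an adapted $J$.

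The main obstacle is this last inclusion: one must recognize that the ``adapted'' hermitian structures reproduce exactly the lower-dimensional Horn problem for $p\times p$ matrices, which requires the identity $C=2S_0^+$, the block computation above, and the remark that fixing $a=D_-$ while letting $b$ range over its orbit already realizes all of ${\cal P}_1$. By contrast $Im{\cal F}\subset{\cal P}_2$ and the trace and convexity statements are essentially formal. The genuinely deep fact, the equality ${\cal P}_1={\cal P}_2$ that would collapse the sandwich, is not part of this lemma and is deferred to the combinatorial lemma of \cite{FFLP}.
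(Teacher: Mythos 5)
Your proposal is correct and takes essentially the same route as the paper: the trace argument for the hyperplane, convexity from the general theory of the Horn problem, $Im\,{\cal F}\subset{\cal P}_2$ straight from the definition together with Lemma 1, and for ${\cal P}_1\subset Im\,{\cal F}$ your block computation with $J=\begin{pmatrix}0&-V\\V^{-1}&0\end{pmatrix}$ is exactly the paper's displayed identity with $\rho=V^{-1}$. The only cosmetic difference is that you let $V$ range over $O(p)$ whereas the paper takes $\rho\in SO(p)$ (so that $J$ stays in the domain $U(p)\backslash SO(2p)$ of ${\cal F}$); this is harmless, since composing $V$ with a diagonal reflection commuting with $D_+$ reduces to $V\in SO(p)$ without changing the spectrum, and your explicit treatment of the dimension count is in fact more careful than the paper's, which leaves it implicit.
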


\begin{proof} The first identity comes from the additivity of the trace function. The fact that both ${\cal P}_1$, ${\cal P}$ and  hence ${\cal P}_2={\cal P}\cap\Delta$, are convex polytopes is a general fact coming from the interpretation of the Horn problem as a moment map problem. Finally, the second inclusion comes from the very definition of ${\cal F}$ and the first comes from Lemma 1 and the following identity, where $\sigma_-$ and $\sigma_+$ are considered as $p\times p$ diagonal matrices and $\rho\in SO(p)$:
$$
\begin{cases}
&\begin{pmatrix}
\sigma_-&0\\
0&\sigma_+
\end{pmatrix}+
\begin{pmatrix}
0&-\rho^{-1}\\
\rho&0
\end{pmatrix}^{-1}
\begin{pmatrix}
\sigma_-&0\\
0&\sigma_+
\end{pmatrix}
\begin{pmatrix}
0&-\rho^{-1}\\
\rho&0
\end{pmatrix}\\
=&
\begin{pmatrix}
\sigma_-+\rho^{-1}\sigma_+\rho&0\\
0&\rho\sigma_-\rho^{-1}+\sigma_+
\end{pmatrix}\cdot
\end{cases}
$$
\end{proof}

\noindent{\bf Remark.} The choice of the partition $\sigma=\sigma_-\cup\sigma_+$ of $\sigma$ is dictated by the following theorem, which proves that any other partition of $\sigma$ into two subsets with $p$ elements will lead to a smaller polytope ${\cal P}_1$:
\begin{theorem}[\cite{FFLP} Proposition 2.2]  Let $A$ and $B$ be $p\times p$ Hermitian matrices. Let  $\sigma_1\ge\sigma_2\ge\cdots\ge\sigma_{2p}$ be the eigenvalues of $A$ and $B$ arranged in descending order. Then there exist Hermitian matrices $\tilde A$ and $\tilde B$ with eigenvalues $\sigma_1\ge\sigma_3\ge\cdots\ge\sigma_{2p-1}$ and $\sigma_2\ge\sigma_4\ge\cdots\ge\sigma_{2p}$ respectively, such that $\tilde A+\tilde B=A+B$.
\end{theorem}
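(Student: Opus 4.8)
Since the matrix $C:=A+B$ is held fixed, the claim is that $C$ can be rewritten as a sum of two Hermitian matrices whose spectra are the interlacing halves $\sigma_-=(\sigma_1,\sigma_3,\dots,\sigma_{2p-1})$ and $\sigma_+=(\sigma_2,\sigma_4,\dots,\sigma_{2p})$ of the combined spectrum. I would first reduce this to a statement about Horn polytopes. Writing $\mathrm{Horn}(\mu,\nu)\subset W_p^+$ for the set of ordered spectra of sums $\tilde A+\tilde B$ with $\sigma(\tilde A)=\mu$ and $\sigma(\tilde B)=\nu$, the desired conclusion holds if and only if $\sigma(C)\in\mathrm{Horn}(\sigma_-,\sigma_+)$: one direction is immediate, and for the other, once $\sigma(C)$ is realised as the spectrum of some $A'+B'$ with $\sigma(A')=\sigma_-$ and $\sigma(B')=\sigma_+$, a single unitary $W$ carrying $A'+B'$ to $C$ yields $\tilde A=WA'W^{*}$, $\tilde B=WB'W^{*}$. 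As $C$ is literally $A+B$, its spectrum already lies in $\mathrm{Horn}(\sigma(A),\sigma(B))$ by definition, so the whole theorem collapses to the polytope inclusion
$$\mathrm{Horn}\bigl(\sigma(A),\sigma(B)\bigr)\subseteq\mathrm{Horn}(\sigma_-,\sigma_+);$$
moreover $A,B$ may realise an arbitrary splitting of $\{\sigma_1,\dots,\sigma_{2p}\}$ into two ordered $p$-tuples, so what must be shown is that the interlacing splitting produces the largest Horn polytope among all such splittings. This is precisely the assertion of the Remark preceding the statement.

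\smallskip

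\noindent Both polytopes lie in the single hyperplane $\sum_k\lambda_k=\sum_\ell\sigma_\ell$ already recorded in Lemma~2, this sum being independent of the splitting. For the inclusion itself the natural first attempt is to compare the two defining systems of Horn inequalities $\sum_{k\in K}\lambda_k\le\sum_{i\in I}\mu_i+\sum_{j\in J}\nu_j$, whose admissible triples $(I,J,K)$ depend only on $p$ and are therefore common to every splitting. The case $p=2$ is encouraging: the interlacing interval $[\max(\sigma_1+\sigma_4,\sigma_2+\sigma_3),\,\sigma_1+\sigma_2]$ plainly contains the intervals of the ``top/bottom'' and ``outer/inner'' splittings, each of which shrinks exactly one endpoint, and here every constraint of the interlacing polytope is the weakest. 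One is thus tempted to prove the inclusion term by term, by checking that interlacing maximises the right-hand side $\sum_I\mu_i+\sum_J\nu_j$ of every admissible inequality over all splittings.

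\smallskip

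\noindent That term-by-term strategy is exactly where I expect the real difficulty, and in fact it fails as stated: interlacing does \emph{not} maximise the right-hand side of every admissible Horn inequality. For $p=3$ and the admissible triple $(I,J,K)=(\{2\},\{2\},\{3\})$ the interlacing value is $\sigma_3+\sigma_4$, whereas the splitting $\mu=(\sigma_1,\sigma_2,\sigma_5),\ \nu=(\sigma_3,\sigma_4,\sigma_6)$ gives the strictly larger $\sigma_2+\sigma_4$. Containment nevertheless holds, because for that splitting a different Horn inequality—here $\lambda_3\le\mu_3+\nu_1=\sigma_3+\sigma_5$—is tighter and already forces $\lambda_3\le\sigma_3+\sigma_4$. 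So the correct argument must match each facet of the interlacing polytope not to the same triple but to some, possibly different, Horn inequality of the given splitting whose right-hand side is no larger, and must do so uniformly in the splitting. Establishing this matching is the genuine combinatorial core: I would pursue it by induction on $p$ through the Horn recursion, which generates the admissible triples in dimension $p$ from those in dimension $p-1$ and should let the comparison be propagated inductively. This is the deep lemma supplied by \cite{FFLP}; the analytic alternative of deforming the splitting $(\sigma(A),\sigma(B))\to(\sigma_-,\sigma_+)$ while keeping $\sigma(C)$ inside the moving polytope meets the same monotonicity and additionally demands control of eigenvalue crossings, so I would favour the combinatorial route.
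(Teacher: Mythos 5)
Your reduction is correct: fixing $C=A+B$, once $\sigma(C)$ is realised as the spectrum of some $A'+B'$ with $\sigma(A')=\sigma_-$, $\sigma(B')=\sigma_+$, conjugating by the unitary carrying $A'+B'$ to $C$ does produce the required $\tilde A,\tilde B$, so the theorem is indeed equivalent to the inclusion $\mathrm{Horn}(\mu,\nu)\subseteq\mathrm{Horn}(\sigma_-,\sigma_+)$ for every splitting $(\mu,\nu)$ of the $2p$ eigenvalues. Your $p=3$ computation is also right: $(\{2\},\{2\},\{3\})$ is admissible since $2+2=3+1$, the splitting $\mu=(\sigma_1,\sigma_2,\sigma_5)$, $\nu=(\sigma_3,\sigma_4,\sigma_6)$ gives right-hand side $\sigma_2+\sigma_4\ge\sigma_3+\sigma_4$, and the repair via the admissible triple $(\{3\},\{1\},\{3\})$, giving $\lambda_3\le\sigma_5+\sigma_3\le\sigma_3+\sigma_4$, is valid — so you have correctly shown that term-by-term domination fails and correctly located the difficulty. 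But locating it is where the proposal stops. The facet-by-facet matching, uniform in the splitting, is never constructed, and the suggested induction ``through the Horn recursion'' is only a plan — one your own counterexample warns against, since the inequality doing the work changes from triple to triple, so there is nothing evident to propagate along the recursion. You then close by invoking ``the deep lemma supplied by \cite{FFLP}'', which is the very proposition being proved; as a self-contained argument the proposal is therefore circular, and this is the genuine gap.

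That said, you should know that the paper is in exactly the same position: this statement is quoted verbatim from \cite{FFLP} (Proposition 2.2) and is never proved in the paper. It is used only to justify the Remark that the interlacing partition $\sigma=\sigma_-\cup\sigma_+$ yields the largest polytope ${\cal P}_1$ among all partitions, while the paper's actual main result (${\cal P}_1={\cal P}_2$) rests on a different imported ingredient, Lemma 1.18 of \cite{FFLP} on domino-decomposable Young diagrams, whose proof goes through the Carr\'e--Leclerc version of the Littlewood--Richardson rule \cite{CL}. So your deference to \cite{FFLP} matches the paper's own treatment, and your reduction plus failure analysis is in fact more than the paper offers for this particular statement; just be clear that the combinatorial core — established in \cite{FFLP} through Littlewood--Richardson combinatorics, not by elementary manipulation of the Horn recursion — remains unproved in your write-up.
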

This was used in \cite{C1} to prove that ${\cal P}_1= Im {\cal A}$ is the image under the frequency map ${\cal F}$ of the {\it adapted} hermitian structures, i.e. those $J$ which send some $p$-dimensional subspace of $\R^{2p}$ generated by eigenvectors of $S_0$ onto its orthogonal.

\section{The projection property}

In this section, we prove the 
\begin{theorem}The two polytopes ${\cal P}_1$ and ${\cal P}_2$ coincide.
\end{theorem}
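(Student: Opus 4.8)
\noindent The plan is to prove the only missing inclusion. Since Lemma 2 already gives $\mathcal P_1\subset\mathcal P_2$, the whole content of the statement is the reverse inclusion $\mathcal P_2\subset\mathcal P_1$, and the idea is to transport a point of $\mathcal P_2$ out of the $2p$-dimensional real symmetric setting into the $p$-dimensional Hermitian one, where the combinatorial lemma of \cite{FFLP} will recognize it as a point of $\mathcal P_1$.

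First I would unfold the definition $\mathcal P_2=\mathcal P\cap\Delta$. Let $\nu=(\nu_1\ge\cdots\ge\nu_p)$ lie in $\mathcal P_2$. By definition there are $2p\times 2p$ real symmetric matrices $A,B$ with $\sigma(A)=\sigma(B)=\sigma(S_0)$ whose sum $C=A+B$ has the hermitian spectrum $\{(\nu_1,\dots,\nu_p),(\nu_1,\dots,\nu_p)\}$. By Lemma 1 there is then a hermitian structure $J=R^{-1}J_0R$ commuting with $C$. Replacing $A,B,C$ by their conjugates $RAR^{-1},RBR^{-1},RCR^{-1}$ changes none of the three spectra and lets me assume $J=J_0$; now $C$ is $J_0$-hermitian and therefore defines, through the identification $\R^{2p}\equiv\C^p$, a $p\times p$ complex Hermitian matrix $\hat C$ whose spectrum is exactly $\nu$.

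The crux is then to manufacture, from the single piece of information that this $J_0$-hermitian $\hat C$ is the sum of two real symmetric $2p\times 2p$ matrices each carrying the complete spectrum $\sigma(S_0)$, a genuine Hermitian splitting of $\hat C$ governed by the interlacing partition $\sigma(S_0)=\sigma_-\cup\sigma_+$. This is exactly what the deep combinatorial lemma of \cite{FFLP} is meant to provide: it characterizes the $J$-hermitian sums $C=A+B$ of two $2p\times 2p$ real symmetric matrices of common spectrum $\sigma(S_0)$ as precisely those for which the associated $\hat C$ can be written $\hat C=\hat a+\hat b$ with $\hat a,\hat b$ Hermitian of respective spectra $\sigma_-$ and $\sigma_+$. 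I would quote that lemma and apply it to the $\hat C$ just built.

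It then remains only to pass from complex Hermitian to real symmetric $p\times p$ matrices. Because the Horn polytope for the sum of two $p\times p$ matrices with prescribed spectra is field-independent (the same over $\R$ and over $\C$), and using Theorem 3 to fix the two spectra at $\sigma_-$ and $\sigma_+$ should the lemma deliver a coarser partition, the value $\nu=\sigma(\hat a+\hat b)$ is also attained as $\sigma(a+b)$ for real symmetric $a,b$ with $\sigma(a)=\sigma_-$, $\sigma(b)=\sigma_+$. Hence $\nu\in\mathcal P_1$, which yields $\mathcal P_2\subset\mathcal P_1$ and, with Lemma 2, the equality. The only hard step is the FFLP lemma itself: the nontrivial point is that the mere doubling of the eigenvalues of $C=A+B$, with $A$ and $B$ sharing the full spectrum $\sigma(S_0)$, already forces the fine arithmetic of the partition $\sigma_-/\sigma_+$ onto a Hermitian decomposition of $\hat C$; once that is granted, everything else is routine and the equality indeed follows directly.
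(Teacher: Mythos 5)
Your frame is right—Lemma 2 already gives ${\cal P}_1\subset{\cal P}_2$, so everything rests on ${\cal P}_2\subset{\cal P}_1$—and your reduction (conjugate so that $J=J_0$ and read $C$ as a $p\times p$ Hermitian matrix $\hat C$ with spectrum $\nu$) is harmless. But the pivotal step is circular. The statement you attribute to \cite{FFLP}, namely that a sum $C=A+B$ of two real symmetric $2p\times 2p$ matrices with common spectrum $\sigma(S_0)$ is $J$-hermitian precisely when $\hat C$ admits a Hermitian splitting $\hat C=\hat a+\hat b$ with $\sigma(\hat a)=\sigma_-$, $\sigma(\hat b)=\sigma_+$, is \emph{not} the FFLP lemma. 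Lemma 1.18 of \cite{FFLP} is purely combinatorial: if $(I,J,K)\in T^p_r$ then the doubled triple $(I_2,J_2,K_2)$ lies in $T^{2p}_{2r}$; it says nothing directly about matrices or Hermitian decompositions. The matrix-level ``characterization'' you invoke is essentially Corollary 3 of the present paper, i.e.\ a \emph{consequence} of the theorem you are trying to prove, so quoting it as an input begs the question. And the difficulty it papers over is genuine: $A$ and $B$ need not commute with $J$, so the decomposition $C=A+B$ does not descend to $\C^p$, and nothing in your argument produces \emph{any} Hermitian splitting of $\hat C$; in particular your appeal to Theorem 3 (FFLP Proposition 2.2), which requires such a splitting as input in order to re-partition the spectra into $\sigma_-$ and $\sigma_+$, has nothing to act on.

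The paper closes this gap by never constructing a splitting of $\hat C$ at all: it works at the level of inequalities. Since $C=A+B$ with $\sigma(A)=\sigma(B)=\sigma$, the ordered spectrum $\hat\gamma$ of $C$ satisfies every Horn inequality at level $2p$; restricting to the doubled triples supplied by Lemma 1.18 gives, for every $(I,J,K)\in T^p_r$,
$$\sum_{k\in K}\bigl(\hat\gamma_{2k-1}+\hat\gamma_{2k}\bigr)\le 2\sum_{i\in I}\sigma_{2i-1}+2\sum_{j\in J}\sigma_{2j},$$
and for $\nu\in{\cal P}_2$ one has $\hat\gamma_{2k-1}=\hat\gamma_{2k}=\nu_k$, so this is exactly the Horn inequality $(^*IJK)$ for the triple $(\sigma_-,\sigma_+,\nu)$. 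Combined with the trace identity $\sum_i\nu_i=\sum_j\sigma_j$ and the fact that the Horn inequalities together with the trace condition characterize the Horn polytope for real symmetric matrices (the hard, proved direction of Horn's conjecture, see \cite{F}—this is where your correct but dangling remark on field-independence belongs), this yields $\nu\in{\cal P}_1$; the paper in fact gets the stronger statement that the whole orthogonal projection $\pi_\Delta({\cal P})$ lies in ${\cal P}_1$, not merely ${\cal P}\cap\Delta$. Repair your middle step along these lines—derive the level-$p$ inequalities from the doubled triples instead of postulating a Hermitian splitting—and your proof closes; the splitting you wanted then comes out at the end, as Corollary 3 does in the paper.
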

\begin{corollary} ${Im {\cal F}}={\cal P}_1= Im {\cal A}$. In other words, the image by the frequency map ${\cal F}$ of the adapted structures is already the full image $Im {\cal F}$. 
\end{corollary}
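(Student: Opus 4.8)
The plan is to obtain the Corollary as an immediate consequence of Theorem~4 together with Lemma~2, so that all the genuine content lies in Theorem~4. Indeed, once the equality $\mathcal{P}_1=\mathcal{P}_2$ is established, the chain of inclusions $\mathcal{P}_1\subseteq Im\mathcal{F}\subseteq\mathcal{P}_2$ of Lemma~2 collapses and forces $Im\mathcal{F}=\mathcal{P}_1$; combined with the equality $\mathcal{P}_1=Im\mathcal{A}$ proved in \cite{C1} (via Theorem~3), this yields $Im\mathcal{F}=\mathcal{P}_1=Im\mathcal{A}$, and the second sentence of the Corollary is simply the reading of this identity. Since Lemma~2 already gives $\mathcal{P}_1\subseteq\mathcal{P}_2$, the task reduces to proving the reverse inclusion $\mathcal{P}_2\subseteq\mathcal{P}_1$.

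To this end I would start from a point $\nu=(\nu_1\ge\cdots\ge\nu_p)\in\mathcal{P}_2$. By definition there exist real symmetric $2p\times 2p$ matrices $A$ and $B$ with $\sigma(A)=\sigma(B)=\sigma(S_0)=\sigma$ whose sum $C=A+B$ has the doubled spectrum $\{(\nu),(\nu)\}$. By Lemma~1 this is equivalent to the existence of a hermitian structure $J=R^{-1}J_0R$ commuting with $C$; fix such a $J$ and, after conjugating the whole configuration by $R$, assume $J=J_0$, so that $C$ is $J_0$-hermitian and corresponds, under the identification $\R^{2p}\cong\C^p$, to a $p\times p$ complex Hermitian matrix $c$ with spectrum $\nu$.

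The decisive step is to normalise the pair $(A,B)$ without altering $C=A+B$. Here I would invoke the deep combinatorial lemma of \cite{FFLP}, which characterises the hermitian sums of two real symmetric $2p\times 2p$ matrices with the common spectrum $\sigma$: it allows one to replace $(A,B)$ by a pair $(\tilde A,\tilde B)$ with $\tilde A+\tilde B=C$ and $\sigma(\tilde A)=\sigma(\tilde B)=\sigma$ that is \emph{adapted} to $J_0$, namely (in a basis compatible with the splitting $\R^{2p}=\R^p\oplus\R^p$ defining $J_0$) $\tilde A=\mathrm{diag}(a_-,a_+)$ with $\sigma(a_-)=\sigma_-$ and $\sigma(a_+)=\sigma_+$, and $\tilde B=J_0^{-1}\tilde A\,J_0$. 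An elementary computation entirely analogous to the one at the end of the proof of Lemma~2 then gives
$$C=\mathrm{diag}(a_-+a_+,\,a_-+a_+),\qquad \sigma(a_-)=\sigma_-,\ \ \sigma(a_+)=\sigma_+,$$
so that $\nu=\sigma(a_-+a_+)$ is the ordered spectrum of a sum of two $p\times p$ symmetric matrices with spectra $\sigma_-$ and $\sigma_+$. By the very definition of $\mathcal{P}_1$ this means $\nu\in\mathcal{P}_1$, which completes the inclusion $\mathcal{P}_2\subseteq\mathcal{P}_1$ and hence Theorem~4.

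The hard part is exactly this normalisation. A priori a hermitian realisation $C=A+B$ with $\sigma(A)=\sigma(B)=\sigma$ might split $\sigma$ into two $p$-element blocks other than the interleaving blocks $\sigma=\sigma_-\cup\sigma_+$; the Remark following Lemma~2 shows that such a non-interleaving partition would produce a strictly smaller polytope, so it is genuinely necessary to know that the interleaving splitting can always be achieved. Nor does a soft argument suffice: $\mathcal{P}_1$ and $\mathcal{P}_2$ are both $(p-1)$-dimensional convex polytopes lying in the same hyperplane $\sum_i\nu_i=\sum_j\sigma_j$, so equality cannot be read off from a dimension count. What makes the reverse inclusion work is precisely the combinatorial lemma of \cite{FFLP}, which guarantees that for \emph{every} hermitian sum the adapted, interleaving splitting is available; granting this input, the argument above closes the proof of Theorem~4 and hence of the Corollary.
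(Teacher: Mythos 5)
Your top-level deduction of the Corollary is exactly the paper's: Lemma~2 gives the sandwich $\mathcal{P}_1\subseteq Im\,\mathcal{F}\subseteq\mathcal{P}_2$, Theorem~4 collapses it, and Theorem~3 (Proposition~2.2 of \cite{FFLP}, as used in \cite{C1}) identifies $\mathcal{P}_1$ with $Im\,\mathcal{A}$. The gap is in your proof of the reverse inclusion $\mathcal{P}_2\subseteq\mathcal{P}_1$, i.e.\ of Theorem~4 itself. Your ``decisive step'' attributes to the combinatorial lemma of \cite{FFLP} (their lemma~1.18, Theorem~5 of the paper) a matrix-level normalization: that any hermitian sum $C=A+B$ with $\sigma(A)=\sigma(B)=\sigma$ can be rewritten as $C=\tilde A+J_0^{-1}\tilde A J_0$ with $\tilde A=\mathrm{diag}(a_-,a_+)$, $\sigma(a_\pm)=\sigma_\mp$. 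No such statement appears in \cite{FFLP}: lemma~1.18 is purely combinatorial --- it asserts only that $(I,J,K)\in T^p_r$ implies $(I_2,J_2,K_2)\in T^{2p}_{2r}$ --- and produces no decomposition of matrices. The normalization you invoke is, at the level of spectra, precisely the assertion $\mathcal{P}_2\subseteq\mathcal{P}_1$ being proved (it is the content of the paper's final Corollary, which is \emph{deduced from} the theorem, not an input to it), so your argument is circular. Nor can Proposition~2.2 of \cite{FFLP} supply it: applied to the $2p\times 2p$ pair $(A,B)$, whose combined spectrum is $\sigma\cup\sigma$ with every value doubled, the interleaving split merely returns the two copies of $\sigma$ you started with.

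The paper's actual route never decomposes the matrix $C$; it works entirely at the level of Horn inequalities. Given any $C=A+B$ with $\sigma(A)=\sigma(B)=\sigma$ and ordered spectrum $\hat\gamma$, the Horn inequality $(^*I_2J_2K_2)$ for the triple $(\sigma,\sigma,\hat\gamma)$ --- valid because $(I_2,J_2,K_2)\in T^{2p}_{2r}$ by Lemma~3 and lemma~1.18 --- reads, after setting $\nu_k=\frac{\hat\gamma_{2k-1}+\hat\gamma_{2k}}{2}$, exactly the inequality $\sum_{k\in K}\nu_k\le\sum_{i\in I}\sigma_{2i-1}+\sum_{j\in J}\sigma_{2j}$, i.e.\ $(^*IJK)$ for the triple $(\sigma_-,\sigma_+,\nu)$. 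Since the inequalities $(^*IJK)$, $(I,J,K)\in T^p_r$, together with the trace identity, characterize the Horn polytope $\mathcal{P}_1$ (Horn's conjecture, proved via Klyachko and Knutson--Tao, cf.\ \cite{F}), this shows that the whole orthogonal projection $\pi_\Delta(\mathcal{P})$ lies in $\mathcal{P}_1$; as $\mathcal{P}_2=\mathcal{P}\cap\Delta\subseteq\pi_\Delta(\mathcal{P})$ and $\mathcal{P}_1\subseteq\mathcal{P}_2$ by Lemma~2, everything collapses. To repair your write-up, replace the unjustified normalization by this inequality-level argument; the existence of an adapted realization of a given hermitian $C$ (up to conjugacy) is then recovered afterwards, as in the paper's last Corollary.
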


\noindent We need recall the inductive definition of the Horn inequalities which define the Horn polytopes (see \cite{F}).  For a sum $a+b=c$ of symmetric $p\times p$ matrices  
with respective (ordered in decreasing order) spectra 
$$\alpha=(\alpha_1,\cdots,\alpha_p),\;  \beta=(\beta_1,\cdots,\beta_p),\; \gamma=(\gamma_1,\cdots,\gamma_p),$$
they read
$$
(^*IJK)\quad\quad\quad \forall r<p,\; \forall (I,J,K)\in T^p_r,\quad \sum_{k\in K}\gamma_k\le \sum_{i\in I}\alpha_i+\sum_{j\in J}\beta_j,
$$
where $T^p_r$ (notation of \cite{F}, noted $LR^p_r$ by reference to Littlewood-Richardson coefficients in \cite{FFLP}) is defined as follows:
let $U^p_r$ be the set of triples $(I,J,K)$ of subsets of cardinal $r$ of $\{1,2,\cdots, p\}$ 
such that
$$\sum_{i\in I}i+\sum_{j\in J}j=\sum_{k\in K}k+\frac{r(r+1)}{2}.$$
Then set $T^p_1=U^p_1$ and define recursively $T^p_r$ by
$$
T^p_r=
\begin{bmatrix}
(I,J,K)\in U^p_r, \forall s<r,\, \forall (F,G,H)\in T^r_s, \\
\sum_{f\in F}i_f+\sum_{g\in G}j_g\le \sum_{h\in H}k_h+\frac{s(s+1)}{2}
\end{bmatrix}
$$

\noindent An immediate computation gives the following
\begin{lemma}
Let
$$
\begin{cases}
I_2&=(2i_1-1,2i_2-1,\cdots,2i_r-1,2j_1,2j_2,\cdots,2j_r),\\
J_2&=(2i_1-1,2i_2-1,\cdots,2i_r-1,2j_1,2j_2,\cdots,2j_r),\\
K_2&=(2k_1-1,2k_1,2k_2-1,2k_2,\cdots,2k_r-1,2k_r),
\end{cases}
$$

Then $(I_2,J_2,K_2)\in U^{2p}_{2r}$.
\end{lemma}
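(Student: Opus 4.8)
The plan is to verify directly the two conditions that define membership in $U^{2p}_{2r}$: that $I_2,J_2,K_2$ are subsets of $\{1,\dots,2p\}$ of cardinality $2r$, and that their elements satisfy the linear relation $\sum_{x\in I_2}x+\sum_{x\in J_2}x=\sum_{x\in K_2}x+\frac{2r(2r+1)}{2}$. Throughout I assume, as the construction implicitly does, that $(I,J,K)\in U^p_r$, so that $I=(i_1,\dots,i_r)$, $J=(j_1,\dots,j_r)$, $K=(k_1,\dots,k_r)$ are subsets of cardinality $r$ of $\{1,\dots,p\}$ obeying $\sum_a i_a+\sum_a j_a=\sum_a k_a+\frac{r(r+1)}{2}$.

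First I would dispose of the combinatorial bookkeeping. The entries $2i_a-1$ are distinct odd numbers (since the $i_a$ are distinct) and the entries $2j_a$ are distinct even numbers, so the two families are automatically disjoint and $|I_2|=|J_2|=2r$; likewise the pairs $\{2k_a-1,2k_a\}$ are disjoint for distinct $a$, whence $|K_2|=2r$. Since $i_a,j_a,k_a\le p$, every element produced is at most $2p$, so all three sets lie in $\{1,\dots,2p\}$.

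The heart of the matter is then the linear relation, which reduces to a single arithmetic identity. I would compute $\sum_{x\in I_2}x=2\sum_a i_a-r+2\sum_a j_a$ (and the same for $J_2$, since $J_2=I_2$), and $\sum_{x\in K_2}x=\sum_a\bigl((2k_a-1)+2k_a\bigr)=4\sum_a k_a-r$. Substituting these into the required relation and using $\frac{2r(2r+1)}{2}=2r^2+r$, the claim collapses to $2\bigl(\sum_a i_a+\sum_a j_a\bigr)=2\sum_a k_a+r(r+1)$, which is precisely twice the defining relation of $U^p_r$ for $(I,J,K)$. Hence the relation holds and $(I_2,J_2,K_2)\in U^{2p}_{2r}$.

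I do not expect a genuine obstacle here: this is the \emph{immediate computation} announced before the statement, and its whole content is a factor-of-two rescaling of the $U^p_r$ identity. The only point demanding care is the bookkeeping of parities, where one must keep the odd indices $2i_a-1$ (which will eventually read off the $\sigma_-$ part of the spectrum) separate from the even indices $2j_a$ (the $\sigma_+$ part), and check the constant $\frac{2r(2r+1)}{2}$ against $\frac{r(r+1)}{2}$ without slippage. The real work, namely upgrading this membership from $U^{2p}_{2r}$ to the recursively defined $T^{2p}_{2r}$, which is what is needed to convert a large Horn inequality into a small one on the diagonal $\Delta$, lies beyond this lemma.
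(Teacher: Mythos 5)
Your proof is correct and matches the paper's: both verify the defining relation of $U^{2p}_{2r}$ by direct computation, with the identity reducing to twice the $U^p_r$ relation for $(I,J,K)$. Your additional bookkeeping (distinctness via parity, cardinality $2r$, range bounded by $2p$) is sound and merely makes explicit what the paper's one-line verification leaves implicit.
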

\begin{proof} It suffices to check that 
$$2\left[\sum_{i\in I}(2i-1)+\sum_{j\in J}(2j)\right]=\sum_{k\in K}(2k-1)+\sum_{k\in K}2k+\frac{2r(2r+1)}{2}\cdot$$
\end{proof}

Now, comes the key fact:
\begin{theorem}[\cite{FFLP}, lemma 1.18] For any triple $(I,J,K)$ in $T^p_r$, the triple $(I_2,J_2,K_2)$ is in $T^{2p}_{2r}$
\end{theorem}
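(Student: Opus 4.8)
The plan is to argue by induction on the rank $r$, simultaneously for all $p$, working directly from the recursive definition of the sets $T^p_r$ recalled above. By the preceding Lemma we already know that $(I_2,J_2,K_2)\in U^{2p}_{2r}$, so the sum-and-cardinality condition holds; the whole content of the theorem is therefore the recursive family of Horn inequalities. Concretely, it remains to show that for every $s<2r$ and every inner triple $(F,G,H)\in T^{2r}_s$ one has
$$\sum_{f\in F}(I_2)_f+\sum_{g\in G}(J_2)_g\le\sum_{h\in H}(K_2)_h+\frac{s(s+1)}{2},$$
where $(I_2)_f$, $(J_2)_g$, $(K_2)_h$ denote the $f$-th, $g$-th, $h$-th entries of the doubled tuples. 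Small values of $r$ are checked by hand, the induction providing, for every rank $s'<r$ and every $p'$, that doubling already maps $T^{p'}_{s'}$ into $T^{2p'}_{2s'}$.

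The heart of the matter is a contraction that transfers each inner inequality back to the hypothesis $(I,J,K)\in T^p_r$. One reads off, from the interleaved shape of the doubled tuples, how a selection of $s$ positions in $I_2$ (respectively $J_2$) splits between the odd-shifted block $\{2i_\ell-1\}$ and the even-shifted block $\{2j_\ell\}$, and how the selection of $s$ positions in $K_2$ distributes over the consecutive pairs $\{2k_\ell-1,2k_\ell\}$. Substituting and collecting the arithmetic-progression terms rewrites the desired inequality as a nonnegative combination of differences $\sum_{f\in F'}i_f+\sum_{g\in G'}j_g-\sum_{h\in H'}k_h$ in which each $(F',G',H')$ has rank $s'<r$. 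For those inner triples that are themselves doubles of rank-$s'$ triples, the inductive hypothesis identifies the relevant $(F',G',H')$ as a member of $T^r_{s'}$, and the matching $(I,J,K)$-inequality then furnishes exactly the bound required, the leftover constant matching $\frac{s(s+1)}{2}$ through the elementary identity used in the preceding Lemma.

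The main obstacle is the general inner triple $(F,G,H)\in T^{2r}_s$ that is not of doubled form, in particular whenever $s$ is odd: here the inductive hypothesis cannot be applied verbatim. The plan is to exploit the fact that $(F,G,H)$ is not arbitrary in $U^{2r}_s$ but constrained to lie in $T^{2r}_s$, i.e. that it itself obeys the lower Horn inequalities indexed by $T^s_{s''}$; these constraints are precisely what force the contracted data $(F',G',H')$ to land inside the sets $T^r_{s'}$ to which the hypothesis applies. Summing the resulting valid inequalities and adding back the defining equality of $U^p_r$ then closes the induction. Carrying out this bookkeeping — aligning the pairing structure of $K_2$ with the block structure of $I_2$ and $J_2$, and verifying that the residual constant is exactly $\frac{s(s+1)}{2}$ — is where the real difficulty lies, and it is the self-referential nature of the recursion defining the $T$'s that makes the argument close.

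Finally, I would keep in reserve the more conceptual interpretation, in which membership of a triple in $T^p_r$ is equivalent to the nonvanishing of the corresponding Schubert intersection number on the Grassmannian $Gr(r,\C^p)$; under this dictionary the doubling map corresponds to a geometric operation preserving nonvanishing, which would yield the statement without explicit index manipulation. The inductive combinatorial proof sketched above is, however, the route that stays closest to the recursive definition recalled in this section.
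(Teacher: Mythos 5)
You have not proved the theorem; you have restated it in inductive language. The entire content of your argument is the claimed ``contraction'' that rewrites an arbitrary inner inequality, indexed by $(F,G,H)\in T^{2r}_s$ with $s<2r$, as a nonnegative combination of inequalities attached to triples in $T^r_{s'}$, $s'<r$, applied to $(I,J,K)$. You never construct this contraction, and you concede yourself that the ``bookkeeping \dots is where the real difficulty lies'' --- but that bookkeeping \emph{is} the theorem. Concretely, two obstructions are left untouched. First, when $s$ is odd, any selection of $s$ entries of $K_2$ necessarily breaks at least one consecutive pair $\{2k_\ell-1,2k_\ell\}$, so the contracted index data do not assemble into honest rank-$s'$ triples, and your inductive hypothesis (which only covers doubled triples) says nothing about them; your ``plan'' to force the contracted data into $T^r_{s'}$ using the inner Horn constraints on $(F,G,H)$ is an assertion, not an argument. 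Second, the doubled tuple $I_2=J_2$ must be sorted before its $f$-th entry even makes sense, and the way the blocks $\{2i_\ell-1\}$ and $\{2j_\ell\}$ interleave after sorting depends delicately on the relative positions of $I$ and $J$; the step where you ``read off'' how a selection splits between the two blocks silently assumes control over this merge that you have not established.

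This gap is not a matter of polish: no elementary induction on Horn's recursion is known for this statement, and the paper itself does not attempt one --- it cites Lemma 1.18 of \cite{FFLP} and notes that the proof there runs through entirely different machinery. In \cite{FFLP}, membership of $(I,J,K)$ in $T^p_r$ is identified with the nonvanishing of a Littlewood--Richardson coefficient (an identification which is itself the Horn conjecture, i.e.\ Klyachko's theorem together with Knutson--Tao saturation, hence far from free), the doubling $(I,J,K)\mapsto(I_2,J_2,K_2)$ is recognized as a domino operation on the associated Young diagrams, and positivity of the doubled coefficient is extracted from the Carr\'e--Leclerc domino Littlewood--Richardson rule \cite{CL} for splitting the square of a Schur function into its symmetric and antisymmetric parts. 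Your closing paragraph gestures at precisely this dictionary but relegates it to ``reserve'' status and asserts without justification that doubling ``preserves nonvanishing'' --- which is, again, exactly the statement to be proved. To repair the proposal you would either have to carry out the combinatorial contraction in full (and there is real reason to doubt it closes, given the odd-$s$ obstruction), or commit to the Littlewood--Richardson route and do the domino combinatorics, which is what \cite{FFLP} actually does.
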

The proof of this theorem, which concerns the so-called ``domino-decompo\-sable Young diagrams",  is based on a version of the Littlewood-Richardson rule due to Carr\'e and Leclerc  \cite{CL}.
\smallskip

\noindent It implies that, for any 
a sum $A+B=C$ of real symmetric $2p\times 2p$ matrices  
with respective (ordered in decreasing order) spectra 
$$\hat\alpha=(\hat\alpha_1,\cdots,\hat\alpha_{2p}),\;  \hat\beta=(\hat\beta_1,\cdots,\hat\beta_{2p}),\; \hat\gamma=(\hat\gamma_1,\cdots,\hat\gamma_{2p}),$$ 
$(^*I_2,J_2,K_2)$ holds, that is
$$
\sum_{k\in K}(\hat\gamma_{2k-1}+\hat\gamma_{2k})\le \sum_{i\in I}(\hat\alpha_{2i-1}+\hat\beta_{2i-1})+\sum_{j\in J}(\hat\alpha_{2j}+\hat\beta_{2j}).
$$
In particular, if 
$$\hat\alpha=\hat\beta=\sigma=(\sigma_1,\sigma_2,\cdots,\sigma_{2p}),$$
we get that
$$\sum_{k\in K}\frac{\hat\gamma_{2k-1}+\hat\gamma_{2k}}{2}\le \sum_{i\in I}\sigma_{2i-1}+\sum_{j\in J}\sigma_{2j}.$$
Note that the mapping 
$$(\hat\gamma_1,\hat\gamma_2,\cdots,\hat\gamma_{2p-1},\hat\gamma_{2p})\mapsto (\frac{\hat\gamma_1+\hat\gamma_2}{2},\frac{\hat\gamma_1+\hat\gamma_2}{2},\cdots, \frac{\hat\gamma_{2p-1}+\hat\gamma_{2p}}{2},\frac{\hat\gamma_{2p-1}+\hat\gamma_{2p}}{2})$$
 is the orthogonal projection of the ordered set 
$(\hat\gamma_1,\hat\gamma_2,\cdots,\hat\gamma_{2p-1},\hat\gamma_{2p})$ on the {\it diagonal} 
$\Delta$ of $\R^{2p}$ defined by the equations
$\hat\gamma_1=\hat\gamma_2,\cdots,\hat\gamma_{2p-1}=\hat\gamma_{2p}$, that is on the subset of ``hermitian" spectra. Hence a paraphrase of the above theorem is

\begin{theorem} Let $C=A+B$ be the sum of two $2p\times 2p$ real symmetric matrices with the same spectrum $\{\sigma_1\ge\sigma_2\ge\cdots\ge\sigma_{2p}\}$. 

\noindent If $\{\nu_1\ge\cdots\ge\nu_p\}$ is the orthogonal projection on the diagonal $\Delta\equiv\R^p$ of the spectrum 
$\{\hat\gamma_1\ge\hat\gamma_2\ge\cdots\ge\hat\gamma_{2p}\}$ of $C$, that is if $\nu_k=\frac{\hat\gamma_{2k-1}+\hat\gamma_{2k}}{2}$, the triple of ordered spectra
$$\alpha=(\sigma_1,\sigma_3,\cdots,\sigma_{2p-1}),\; \beta=(\sigma_2,\sigma_4,\cdots,\sigma_{2p}),\; \gamma=(\nu_1,\nu_2,\cdots,\nu_p)$$
satisfies the Horn inequality $(^*I,J,K)$. 
\end{theorem}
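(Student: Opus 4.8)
The plan is to obtain each inequality of the $p$-dimensional Horn system for the triple $(\alpha,\beta,\gamma)=(\sigma_-,\sigma_+,\nu)$ by lifting it to a genuine Horn inequality of the $2p$-dimensional problem already solved by $C=A+B$. So I would fix $r<p$ and an arbitrary triple $(I,J,K)\in T^p_r$ and aim at the single scalar inequality $\sum_{k\in K}\nu_k\le\sum_{i\in I}\sigma_{2i-1}+\sum_{j\in J}\sigma_{2j}$; establishing this for all such triples is exactly $(^*I,J,K)$, since these are the constraints cutting out the Horn polytope in dimension $p$, and it is precisely what is needed (together with the trace equality of Lemma 2) to place the projection $\nu$ inside ${\cal P}_1$.

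First I would attach to $(I,J,K)$ the doubled triple $(I_2,J_2,K_2)$ of Lemma 6, in which the two input sets coincide, $I_2=J_2=\{2i-1:i\in I\}\cup\{2j:j\in J\}$, a set of $2r$ distinct indices (the odd ones coming from $I$, the even ones from $J$), while $K_2=\{2k-1,2k:k\in K\}$ collects the consecutive pairs indexed by $K$. Lemma 6 places $(I_2,J_2,K_2)$ in $U^{2p}_{2r}$ by an elementary count; the one substantial ingredient is Theorem 7, the domino-decomposability lemma of \cite{FFLP}, which upgrades this to membership in the full set $T^{2p}_{2r}$. Since $r<p$ gives $2r<2p$, the triple $(I_2,J_2,K_2)$ then indexes a bona fide Horn inequality of the $2p\times 2p$ problem.

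Next I would use the fact that the Horn inequalities are \emph{necessary} conditions: as $C=A+B$ is an actual sum of $2p\times 2p$ real symmetric matrices, its ordered spectrum $\hat\gamma$ together with $\hat\alpha=\sigma(A)$ and $\hat\beta=\sigma(B)$ satisfies every inequality indexed by $T^{2p}_s$, $s<2p$. Applying this to $(I_2,J_2,K_2)$ and reading off the three sums with the help of $I_2=J_2$ gives $\sum_{k\in K}(\hat\gamma_{2k-1}+\hat\gamma_{2k})\le\sum_{i\in I}(\hat\alpha_{2i-1}+\hat\beta_{2i-1})+\sum_{j\in J}(\hat\alpha_{2j}+\hat\beta_{2j})$. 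Specializing to the case at hand $\hat\alpha=\hat\beta=\sigma$, dividing by $2$, and recognizing $\nu_k=\tfrac12(\hat\gamma_{2k-1}+\hat\gamma_{2k})$ as the coordinate of the orthogonal projection of $\hat\gamma$ onto $\Delta$, I recover exactly $(^*I,J,K)$.

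Granting Theorem 7, no real difficulty is left: the construction of $(I_2,J_2,K_2)$ and its membership in $U^{2p}_{2r}$ are the routine counting of Lemma 6, and the passage from $(^*I_2,J_2,K_2)$ to $(^*I,J,K)$ is pure bookkeeping. The two points to keep straight are that $K_2$ lists its indices as consecutive pairs $(2k-1,2k)$, so that after ordering $\hat\gamma_1\ge\cdots\ge\hat\gamma_{2p}$ the selected terms are precisely those appearing in the projection formula, and that the odd/even split making $I_2=J_2$ a set of distinct indices is what allows one inequality to serve for $\alpha$ and $\beta$ simultaneously once $\hat\alpha=\hat\beta$. All the genuine weight of the statement is therefore carried by Theorem 7, whose proof via the Carr\'e--Leclerc form of the Littlewood--Richardson rule I take as given.
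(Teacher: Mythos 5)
Your proposal is correct and follows essentially the same route as the paper: you double $(I,J,K)\in T^p_r$ to $(I_2,J_2,K_2)$, use Lemma 6 for membership in $U^{2p}_{2r}$ and Theorem 7 (lemma 1.18 of \cite{FFLP}) for membership in $T^{2p}_{2r}$, apply the necessity of the Horn inequality $(^*I_2,J_2,K_2)$ to the actual sum $C=A+B$ with $\hat\alpha=\hat\beta=\sigma$, and divide by $2$ to recognize the projection coordinates $\nu_k=\frac{\hat\gamma_{2k-1}+\hat\gamma_{2k}}{2}$. Your bookkeeping (the odd/even split making $I_2=J_2$ a set of $2r$ distinct indices, and $K_2$ consisting of the consecutive pairs $(2k-1,2k)$) matches the paper's computation exactly, so there is nothing to add.
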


\noindent This implies the following extremal property of the subset of ``hermitian" spectra:
\begin{corollary}
The orthogonal projection on the diagonal $\Delta$ of the $(2p-1)$-dimensional Horn polytope  ${\cal P}\subset \R^{2p}$ associated with the spectra 
$$\sigma(A)=\sigma(B)=\{\sigma_1\ge\sigma_2\ge\cdots\ge\sigma_{2p}\}$$
coincides with the $(p-1)$-dimensional Horn polytope ${\cal P}_1\in\R^p$ associated with the spectra
$$\sigma(a)=(\sigma_1,\sigma_3,\cdots,\sigma_{2p-1}),\; \sigma(b)=(\sigma_2,\sigma_4,\cdots,\sigma_{2p}).$$
\end{corollary}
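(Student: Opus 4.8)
The plan is to prove the two inclusions $\pi_\Delta({\cal P})\subseteq{\cal P}_1$ and ${\cal P}_1\subseteq\pi_\Delta({\cal P})$ separately, writing $\pi_\Delta:\R^{2p}\to\Delta\equiv\R^p$ for the orthogonal projection $(\hat\gamma_1,\dots,\hat\gamma_{2p})\mapsto\nu$ with $\nu_k=\frac{\hat\gamma_{2k-1}+\hat\gamma_{2k}}{2}$. The guiding observation is that, by the solution of Horn's problem recalled in \cite{F}, the target polytope ${\cal P}_1$ is exactly the set of $\nu\in W_p^+$ satisfying the single trace equation $\sum_{k=1}^p\nu_k=\sum_{j=1}^{2p}\sigma_j$ together with all the inequalities $(^*IJK)$ indexed by $r<p$ and $(I,J,K)\in T^p_r$, applied to the triple $\alpha=\sigma_-=(\sigma_1,\sigma_3,\dots,\sigma_{2p-1})$, $\beta=\sigma_+=(\sigma_2,\sigma_4,\dots,\sigma_{2p})$, $\gamma=\nu$. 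Hence establishing membership in ${\cal P}_1$ amounts to verifying precisely these constraints.

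For the first inclusion, take $\hat\gamma\in{\cal P}$, say $\hat\gamma=\sigma(C)$ with $C=A+B$ and $\sigma(A)=\sigma(B)=\sigma$, and put $\nu=\pi_\Delta(\hat\gamma)$. The trace equation is immediate from additivity of the trace, since $\sum_k\nu_k=\frac12\sum_i\hat\gamma_i=\frac12\,\mathrm{tr}\,C=\frac12(\mathrm{tr}\,A+\mathrm{tr}\,B)=\sum_{j=1}^{2p}\sigma_j$. Each Horn inequality for $\nu$ is then exactly the assertion of Theorem 8 (the paraphrase): for a given $(I,J,K)\in T^p_r$ one forms $(I_2,J_2,K_2)$, which lies in $U^{2p}_{2r}$ by Lemma 6 and in $T^{2p}_{2r}$ by the key combinatorial Theorem 7, applies the big Horn inequality $(^*I_2,J_2,K_2)$ — legitimate because $C$ is a genuine sum $A+B$ — and specializes to $\hat\alpha=\hat\beta=\sigma$, yielding $\sum_{k\in K}\nu_k\le\sum_{i\in I}\sigma_{2i-1}+\sum_{j\in J}\sigma_{2j}$. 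As $(I,J,K)$ runs over all of $\bigcup_{r<p}T^p_r$ these are exactly the defining inequalities of ${\cal P}_1$, so $\nu\in{\cal P}_1$ and therefore $\pi_\Delta({\cal P})\subseteq{\cal P}_1$.

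For the reverse inclusion I would avoid re-deriving inequalities and instead reuse the chain already recorded in Lemma 2, namely ${\cal P}_1\subseteq{\cal P}_2={\cal P}\cap\Delta$. Since an orthogonal projection restricts to the identity on its image subspace, every point of ${\cal P}\cap\Delta$ equals its own projection and so belongs to $\pi_\Delta({\cal P})$; that is, ${\cal P}\cap\Delta\subseteq\pi_\Delta({\cal P})$. Concatenating gives ${\cal P}_1\subseteq{\cal P}\cap\Delta\subseteq\pi_\Delta({\cal P})$, which together with the first inclusion yields the claimed equality $\pi_\Delta({\cal P})={\cal P}_1$. As a byproduct the chain ${\cal P}_1\subseteq{\cal P}_2={\cal P}\cap\Delta\subseteq\pi_\Delta({\cal P})\subseteq{\cal P}_1$ collapses, so that ${\cal P}_1={\cal P}_2$, which is Theorem 4.

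The entire difficulty is concentrated in the first inclusion and, within it, in the combinatorial Theorem 7 — that $(I,J,K)\in T^p_r$ forces $(I_2,J_2,K_2)\in T^{2p}_{2r}$ — which is the deep domino-tableau input from \cite{FFLP} and which I take as given. Granting it, the corollary is a formal assembly: the trace identity is bookkeeping, the transfer of inequalities is the paraphrase Theorem 8, and the reverse inclusion is the soft remark that orthogonal projection fixes the diagonal $\Delta$. The only point requiring a little care is the claim that the inequalities produced in the first step exhaust the defining Horn inequalities of the small polytope ${\cal P}_1$; this holds because, by construction, $\bigcup_{r<p}T^p_r$ is exactly the index set of Horn's problem for $p\times p$ symmetric matrices, so no defining inequality of ${\cal P}_1$ is missed.
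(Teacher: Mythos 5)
Your proposal is correct and follows essentially the same route as the paper: the inclusion $\pi_\Delta({\cal P})\subseteq{\cal P}_1$ via Theorem 8 (i.e.\ Lemma 6 plus the FFLP lemma transferring $(I,J,K)\in T^p_r$ to $(I_2,J_2,K_2)\in T^{2p}_{2r}$, then specializing $\hat\alpha=\hat\beta=\sigma$), combined with the converse chain ${\cal P}_1\subseteq{\cal P}_2={\cal P}\cap\Delta\subseteq\pi_\Delta({\cal P})$ from Lemma 2 and the fact that the projection fixes $\Delta$. Your explicit remark that membership in ${\cal P}_1$ is characterized by the trace equation together with the full list of Horn inequalities (the sufficiency direction of Horn's conjecture, via \cite{F}) only makes explicit a step the paper uses tacitly.
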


\noindent In particular, the intersection ${\cal P}_2={\cal P}\cap\Delta$ corresponding to the hermitian spectra, that is those such that
$\hat\gamma_1=\hat\gamma_2,\cdots,\hat\gamma_{2p-1}=\hat\gamma_{2p}$, coincides with ${\cal P}_1$. Indeed, ${\cal P}_2$ coincides with the projection of ${\cal P}$, which itself coincides with ${\cal P}_1$. 
\smallskip

\noindent {\bf Remark.}  The equality $Im {\cal F}={\cal P}_2$ implies the following
\begin{corollary}Let $C:\R^{2p}\to\R^{2p}$ be the sum $C=A+B$ of two symmetric endomorphisms $A$ and $B$ with the same spectrum $\sigma(A)=\sigma(B)=\sigma(S_0)$. Then $C$ is $J$-hermitian for some hermitian structure $J$ on $\R^{2p}$ if and only if it is conjuguate by an element of $SO(2p)$ to a matrix of the form $S_0+\tilde J^{-1}S_0\tilde J$,
where $\tilde J$ is a hermitian structure on $R^{2p}$.  
\end{corollary}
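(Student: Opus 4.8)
The plan is to prove the two implications separately; the whole corollary is essentially a repackaging of the identity $Im{\cal F}={\cal P}_2={\cal P}\cap\Delta$ established above, together with the characterization of hermitian spectra in Lemma 1. No new analytic input is needed: the substance has already been absorbed into the equality ${\cal P}_1={\cal P}_2$.

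\emph{Sufficiency.} Suppose first that $C=Q\bigl(S_0+\tilde J^{-1}S_0\tilde J\bigr)Q^{-1}$ for some $Q\in SO(2p)$ and some hermitian structure $\tilde J$. The matrix $M:=S_0+\tilde J^{-1}S_0\tilde J$ is $\tilde J$-hermitian by construction, hence commutes with $\tilde J$; conjugating the relation $M\tilde J=\tilde JM$ by $Q$ shows that $C$ commutes with $J:=Q\tilde JQ^{-1}$. Since a conjugate of a hermitian structure by an element of $SO(2p)$ is again a hermitian structure, $C$ is $J$-hermitian. Note that this direction does not even use the hypothesis $C=A+B$.

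\emph{Necessity.} Conversely, assume $C=A+B$ with $\sigma(A)=\sigma(B)=\sigma(S_0)$, and that $C$ is $J$-hermitian for some hermitian structure $J$. I would combine two facts about the ordered spectrum of $C$. On one hand, since $C$ is a sum of two symmetric matrices each having the spectrum of $S_0$, its spectrum lies in the Horn polytope ${\cal P}$. On the other hand, Lemma 1 applied to the $J$-hermitian matrix $C$ shows that $\sigma(C)$ is a hermitian spectrum, so that, as a point of $W_{2p}^+$, it lies on the diagonal $\Delta$. Therefore $\sigma(C)$ defines a point of ${\cal P}\cap\Delta={\cal P}_2=Im{\cal F}$ (identifying $\Delta$ with $\R^p$). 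By the very definition of the frequency map, there is then a hermitian structure $\tilde J$ for which $M:=S_0+\tilde J^{-1}S_0\tilde J$ has ordered spectrum $\{(\nu_1,\dots,\nu_p),(\nu_1,\dots,\nu_p)\}=\sigma(C)$. Finally, $C$ and $M$ are real symmetric matrices with the same spectrum, hence conjugate by some $O\in O(2p)$; since the orthogonal centralizer of a symmetric matrix always contains an orientation-reversing element (e.g.\ the reflection fixing the orthogonal complement of a single eigenvector of $M$, which commutes with $M$), the conjugating matrix may be taken in $SO(2p)$, giving $C=Q M Q^{-1}$ as asserted.

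The corollary thus carries no genuine obstacle once ${\cal P}_1={\cal P}_2$ is in hand, and I expect the proof to be short: the only real content is the membership $\sigma(C)\in Im{\cal F}$, which rests on the deep equality of the two polytopes, while everything else is bookkeeping. The single point deserving a word of care is the reduction from $O(2p)$- to $SO(2p)$-conjugacy; it is harmless here because $M$ is symmetric, and in fact its eigenspaces, being $\tilde J$-invariant, are even-dimensional, so orientation can always be corrected inside the stabilizer of $M$.
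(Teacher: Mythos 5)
Your proof is correct and follows exactly the route the paper intends: the paper states this corollary as an immediate consequence of $Im\,{\cal F}={\cal P}_2$, and your argument (sufficiency by conjugating the commutation relation, necessity via Lemma 1 placing $\sigma(C)$ in ${\cal P}\cap\Delta={\cal P}_2=Im\,{\cal F}$, then orthogonal conjugacy of isospectral symmetric matrices) is precisely the bookkeeping the authors leave implicit. Your reduction from $O(2p)$- to $SO(2p)$-conjugacy via a reflection in the centralizer of $M$ is a valid detail the paper does not even mention.
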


\noindent {\bf Note.} The proof of the above results has been written by the first author after he was convinced by the numerical experiments made by the second author that the equality ${\cal P}_1={\cal P}_2$ was plausible when $p=3$ and more precisely that not only the intersection 
${\cal P}_2={\cal P}\cap\Delta$ but also the orthogonal projection of the Horn polytope ${\cal P}$ on 
$\Delta$ was contained in ${\cal P}_1$ after the canonical identification of $\Delta$ with $\R^p$. This led first to a proof when $p=2$ or $3$, obtained by coping directly with Horn's inequalities and then to the discovery that the general case followed from a lemma which turned out to be exactly the lemma 1.18 of \cite{FFLP}. The numerical experiments are described in the next section.

\section{Numerical experiments} 
The numerical checking of the conjecture that $Im {\cal F}={\cal P}_1$, was made on the matrix 
$S_0= \frac{1}{32}{\rm diag}\left\{ 13,8,5,3,2,1 \right\}$,
whose spectrum satisfies the inequalities in \cite{C1} (section 8).
We wrote a program in TRIP \cite{GL11} producing different
rotation matrices $R\in SO(2p)$ in a random way.
Starting from the canonical basis $\xi=\{\xi_1,\dots,\xi_m\},$ $m=p(2p-1)$, 
of $\mathfrak{so}(2p)$, we created a list containing the $m$ one-parameter
subgroups $G_i(t)= e^{t\xi_i}\subset SO(2p)$. We created a 
second list of $m$ random values $[t_i]_{i=1}^m$ 
and a random permutation $[1,2,\dots,m]\to [i_1,i_2,\dots ,i_m]$. 
The random rotation matrix 
was defined as
\begin{eqnarray}
    R = \prod_{j=1}^m G_{i_j}(t_j),\quad 0\leq t_i \leq 2\pi.
    \label{eqn:R}
\end{eqnarray}
The program which plots $\mathcal P_1$ is very simple (the fact that we replaced the conjugation of $J_0$ by the conjugation of $S_0$ is immaterial and comes from the formulation of the conjecture in \cite{C1}): \\
\begin{center}
  \begin{tabular}[ht]{|l|}
    \hline
      \verb+create+ $S_0$ \verb+and+ $J_0$\\
      \verb+for+ $i=1$ \verb+to+ $N_{max}$ \verb+do+\\
      $\quad$ \verb+create+ $R$ \verb+and+ $R^{-1}$;\\ 
      $\quad$ $S = RS_0R^{-1}$;\\
      $\quad$ $C = S - J_0 S J_0$;\\
      $\quad$ \verb+lst = eigenvalues(+$C$\verb+)+;\\
      $\quad$ \verb+plot ( lst[5], lst[3], lst[1] )+;\\
      \verb+end for+.\\
    \hline
  \end{tabular}
\end{center}
We have assigned the value $N_{max}=25000$ obtaining the results shown in Figure 
\ref{fig:AA}. The figure shows also the simplex $\gamma_1+\gamma_2+\gamma_3=1$
and its intersection with $W_3^+$.
\begin{figure}[h]
    \centering
    \includegraphics[scale=0.375]{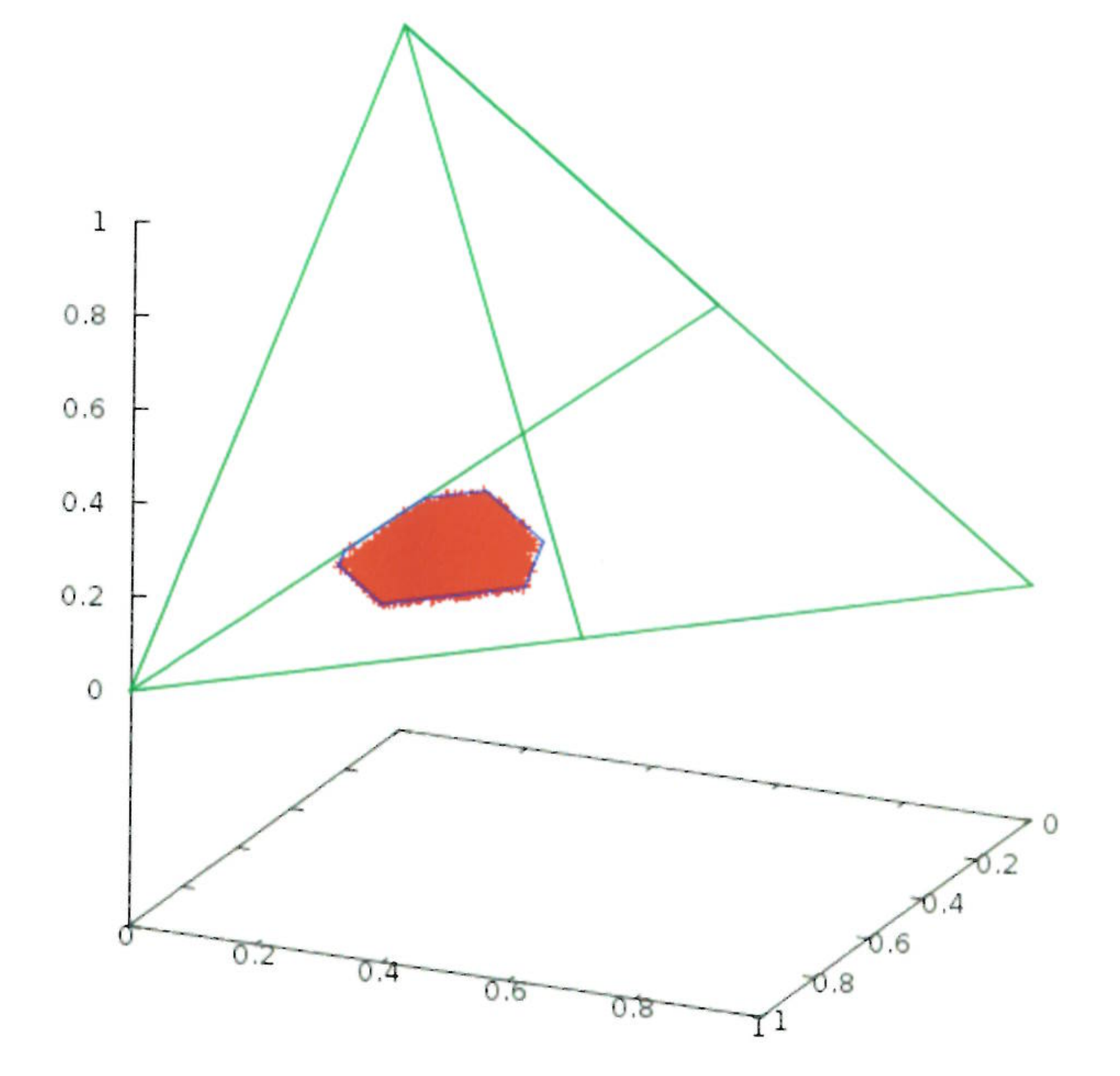}
    \caption{${\cal P}_1=Im {\cal A}$: 25000 random adapted hermitian structures}
    \label{fig:AA}
\end{figure}

\noindent The modified algorithm to estimate the shape of $\mathcal P_2=\mathcal 
P\cap\Delta$ in $W^+_3$ is similar. For a random $R$ in $SO(6)$, the ordered spectrum
$spec(C)=\left( \gamma_1,\dots,\gamma_6 \right)$ of $C=S_0+R^{-1}S_0R$
is  projected orthogonally onto the diagonal $\Delta$ by the map $\pi_\Delta:\mathbb R^6 \to \Delta$: \begin{eqnarray*}
 (\gamma_1,\gamma_2,\dots,\gamma_6)&\mapsto& \left( \frac{\gamma_1+\gamma_2}{2},
 \frac{\gamma_3+\gamma_4}{2},
 \frac{\gamma_5+\gamma_6}{2}
    \right).
\end{eqnarray*}
At first, when $spec(C)$ was $\varepsilon$-close to $\Delta$ \emph{i.e.}, if 
$\sum_{k=1}^3 |\gamma_{2k-1}-\gamma_{2k}|^2 < 2\varepsilon^2$ for 
$\varepsilon$ small, the projected point was plotted in green; otherwise it was 
plotted in red. No particular pattern was found for the green points meanwhile the red ones were all contained in $\mathcal P_1$. 
The algorithm to plot $\pi_\Delta(\mathcal P)$ is

\begin{center}
  \begin{tabular}[ht]{|l|}
    \hline
      \verb+create+ $S_0$\\
      \verb+for+ $i=1$ \verb+to+ $N_{max}$ \verb+do+\\
      $\quad$ \verb+create+ $R$ \verb+and+ $R^{-1}$;\\ 
      $\quad$ $C = S_0 + R^{-1} S_0 R$;\\
      $\quad$ \verb+lst = eigenvalues(+$C$\verb+)+;\\
      $\quad$ \verb+sort( lst )+;\\
      $\quad$ \verb+plot+ $\left( 
      \frac{lst[6]+lst[5]}{2}, 
      \frac{lst[4]+lst[3]}{2}, 
      \frac{lst[2]+lst[1]}{2}\right)$\verb+;+\\ 
      \verb+end for+.\\
    \hline
  \end{tabular}
\end{center}

\noindent The results of the projection $\pi_\Delta(spec(C))$
for $50000$ random rotation matrices are shown in Figure \ref{fig:BB}. 

\begin{figure}[ht]
    \centering
    \includegraphics[scale=0.4]{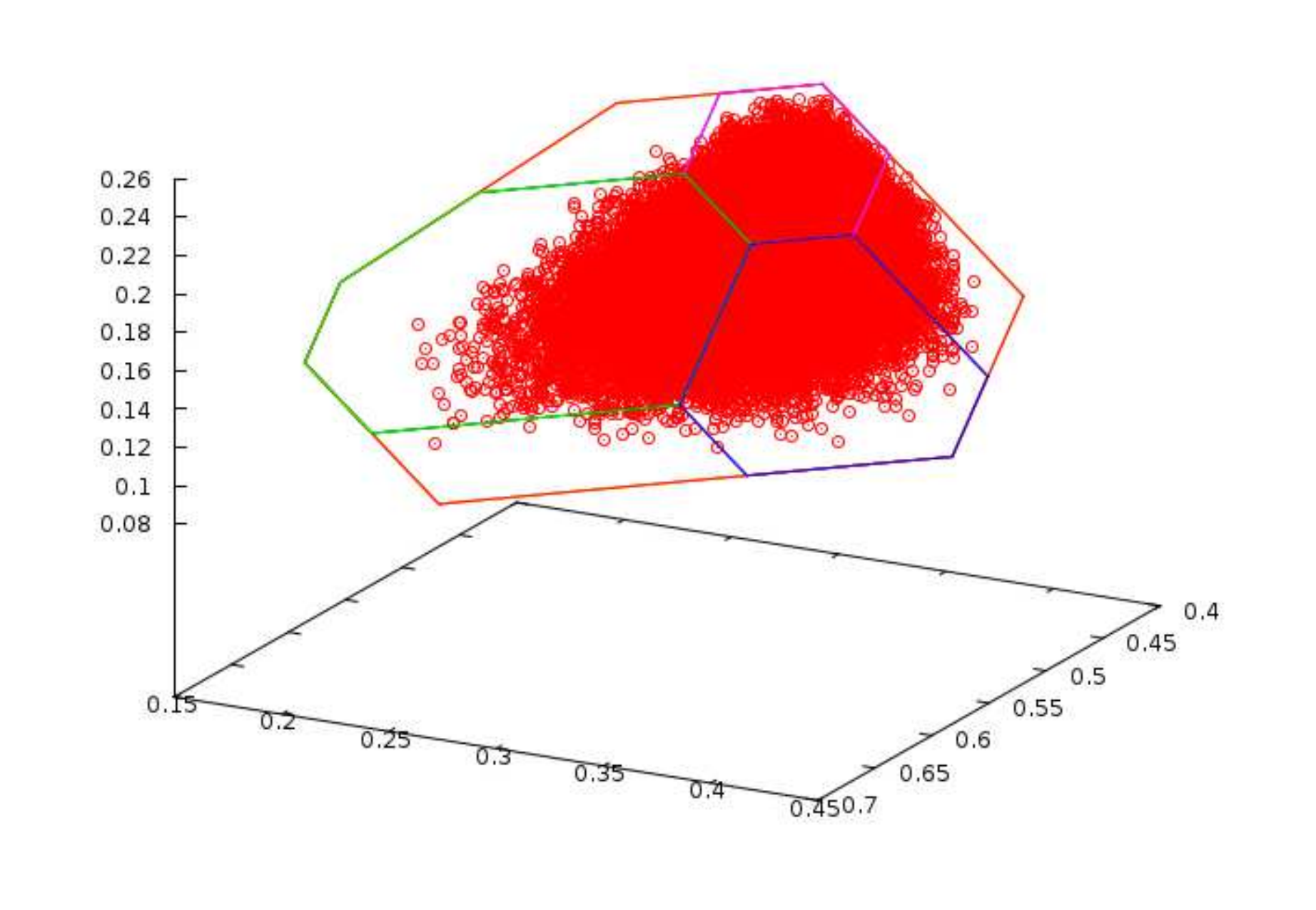}
    \caption{Projection of ${\cal P}$: 50000 random rotations}
    \label{fig:BB}
\end{figure}

The matrix $S_0$ and hence the polytope $\mathcal P_1$ are the same as in Figure~\ref{fig:AA} (the interior lines correspond to the polytopes associated to different partitions of the spectrum of $S_0$, as depicted in the corresponding figure in \cite{C1}). Recall that the polytope ${\cal P}$ has dimension 5; this explains that in order to get a better filling one should have taken many more points. This was not done because the evidence was sufficiently convincing to ask for a proof.

\section{Acknowledgements}  Warm thanks to Sun Shanzhong and Zhao Lei for their interest in this work and numerous discussions.
\smallskip


\begin{thebibliography}{99}

\bibitem[AC]{AC} A. Albouy, A. Chenciner {\sl Le Probl\`eme des $N$ corps et les distances
mu\-tu\-elles}, Inventiones mathematicae 131 (1998), 151-184.

\bibitem[C1]{C1} A. Chenciner {\sl The angular momentum of a relative equilibrium}, 	arXiv:1102.0025v1, final version to appear in D.C.D.S.

\bibitem[C2]{C2} A. Chenciner {\sl The Lagrange reduction of the N-body problem: a survey}, to appear in Acta Mathematica Vietnamica

\bibitem[CL]{CL} C. Carr\'e, B. Leclerc {\sl Splitting the Square of a Schur Function into its Symmetric and Antisymmetric Parts}, Journal of Algebraic Combinatorics 4 (1995), 201-231

\bibitem[F]{F} W. Fulton {\sl Eigenvalues, invariant factors, highest weights, and Schubert calculus},  Bull. Amer. Math. Soc. (N.S.)  {\bf 37}  no. 3, 209-249 (2000) 

\bibitem[FFLP]{FFLP}  S. Fomin, W. Fulton, C.K. Li, Y.T. Poon, 
{\sl Eigenvalues, singular values, and Littlewood-Richardson coefficients},
Amer. J. Math. {\bf 127}, no. 1, 101--127 (2005)

\bibitem[GL11]{GL11} M. Gastineau and J. Laskar, 2011.
{\sc TRIP} 1.1.18, \emph{TRIP Reference manual}.
IMCCE, Paris Observatory. {\tt http://www.imcce.fr/trip/}.

 \end{thebibliography}
\end{document}